\newtheorem{theorem}{Theorem}[section]
\newtheorem{proposition}[theorem]{Proposition}
\newtheorem{lemma}[theorem]{Lemma}
\newtheorem{corollary}[theorem]{Corollary}
\theoremstyle{remark}
\newtheorem{remark}[theorem]{Remark}
\newtheorem{example}[theorem]{\bf Example}
\renewenvironment{proof}{{\noindent\bf Proof.}}{\hfill $\Box$\par\vskip3mm}
\newcommand{\Bb}{\mathcal{B}}
\newcommand{\Tt}{\mathcal{T}}
\def\NN{{\mathbb N}}
\def\KK{{\mathbb K}}
\begin{document}
\title[Semiartinian profinite algebras]{Semiartinian Profinite Algebras have nilpotent Jacobson radical}

\begin{abstract}
We give a method to study the finiteness of the coradical filtration of a coalgebra; as a consequence, we show that a left semiartinian profinite algebra has nilpotent Jacobson radical and is right semiartinian too. Equivalently, we show that a for a semilocal profinite algebra, T-nilpotence implies nilpotence for the Jacobson radical. This answers two open questions from \cite{INT}.

\end{abstract}

\author{Miodrag Cristian Iovanov\\}
\thanks{2010 \textit{Mathematics Subject Classification}. 16T15, 18E40, 16T05}
\date{}
\keywords{profinite algebra, coalgebra, coradical filtration, torsion theory, splitting, rational module, semiartinian}
\maketitle

\section*{Introduction}

In ring theory and non-commutative algebra, semiartinian rings and modules are an important class of objects, and represent a natural generalization of artinian rings and modules. Semiartinian modules are modules which can be built up from from their Loewy series. Specifically, if $M$ is a module, then $M_0=L_0(M)$ is the socle of $M$ (the sum of all simple submodules of $M$), and for each ordinal $\alpha$, $M_\alpha=L_\alpha(M)$ is defined by induction as follows: if $\alpha=\beta+1$ is a successor, then $M_\alpha/M_\beta$ is the semisimple part of $M/M_\beta$, and $M_\alpha=\bigcup\limits_{\beta<\alpha}M_\beta$ otherwise \cite{F}. $M$ is semiartinian if this Loewy series terminates at $M$, i.e. $M=M_\gamma$ for some $\gamma$. 

At the opposite pole, one finds rings whose upper (Jordan) series ``terminates" at $0$. There are many important situations in both commutative and non-commutative algebra, where $\bigcap\limits_n J^n=0$, where $J$ is the Jacobson radical of a ring $A$. For example, every commutative Noetherian ring satisfies this property by the Krull intersection theorem. The Jacobson conjecture states that for every non-commutative (two-sided) Noetherian ring, $\bigcap\limits_n J^n=0$; while this is known for some classes of Noetherian rings (with Krull dimension, commutative), it is still open in general. Another class of rings satisfying ``the Jacobson conjecture" $\bigcap\limits_n J^n=0$ is that of profinite algebras. These are inverse limits of finite dimensional algebras, and are in a way the $\KK$-algebra analogue of profinite groups; they are particular examples of pseudocompact rings introduced by P.Gabriel \cite{Gb}. Given such a profinite algebra $A$, there is always a coalgebra $C$ such that $A\cong C^*$, and so it is at the same time a pseudocompact algebra (with respect to a suitable topology). We recall that a pseudocompact algebra is an algebra with a linear topology which has a basis of two sided ideals of finite codimension, and which is complete and separated in this topology. In fact, the category of pseudocompact algebras with continuous morphisms is dual equivalent to that of coalgebras and morphisms of coalgebras; we refer to \cite{S,DNR,Gb} for details. 

In general, questions about nil and nilpotence properties of Jacobson radical of a ring are of central interest in ring theory. Recall that an ideal $I$ of a ring $R$ is called right T-nilpotent, if given any sequence $(x_n)_n$ of elements in $I$, there is $k$ such that $b_kb_{k-1}\dots b_1=0$. Several important notions in ring theory are connected to some notions of nilpotence. A ring $R$ is left semiartinian if and only if $R/J$ is semiartinian and $J$ is right T-nilpotent. A ring is right perfect (i.e. right modules have projective covers) if and only if $R/J$ is artinian (i.e. $R$ is semilocal) and $J$ is right T-nilpotent, equivalently, $R$ is semilocal and left semiartinian. Of course, T-nilpotence implies nil for every element, and nilpotence of the Jacobson radical implies nil and T-nilpotence, but deciding the converse can be a hard question in general.

Regarded as a generalization of finite dimensional algebras, and also a class of algebras where at least the (conclusion of the) Jacobson conjecture holds, pro-finite algebras is a natural place to study connections between nil and nilpotence properties for the Jacobson radical. As noticed, T-nilpotence is closely related to being semiartinian. A natural question then arises: characterize algebras which are profinite and (one-side) semiartinian. Such an algebra has both a lower Loewy series and an upper Jacobson series, and one can ask in general what can be said about such algebras. This question in particular was asked in \cite{INT}, where it was conjectured that a (left) semiartinian profinite algebra has nilpotent Jacobson radical. In other words, this conjecture states that if a profinite algebra $A$ is semilocal, then {\it T-nilpotence implies nilpotence}. The statement of this conjecture in \cite{INT} was motivated by the fact that all known examples of coalgebras $C$ for which the dual algebra $C^*$ is (left) semiartinian were those with finite coradical filtration. These algebras were studied there in connection with the following problem, called the Dickson splitting problem. Given a ring $A$, one can consider the class of left semiartinian modules, which is a subcategory of the left $A$-modules, closed under extensions, coproducts, and subquotients. This is called the Dickson torsion, and was introduced and studied in \cite{D}. The Dickson splitting problem asks to characterize when the Dickson torsion of any module (i.e. its largest semiartinian submodule) splits off. This is part of a general class of problems called splitting problems (relative to torsion theories), which have a long history of mathematical interest (see, for example, \cite{G, K1,K2,Rot,T1,T2,T3}; also, \cite{C,I1,IO,NT} for splitting problems for profinite and pseudocompact algebras). It was originally conjectured that if the Dickson torsion splits off in any left $A$-module for a ring $A$, then $A$ is left semiartinian; this was proved to be false by a counterexample in \cite{Co}. However, it was proved to hold for profinite algebras in \cite{INT}. 

A second natural question asked in \cite{INT} is whether a left semiartinian profinite algebra $A$ is also right semiartinian; a positive answer to the first question would obviously imply a positive in the second. Here we answer these questions, and our main result is:

\begin{theorem}[Theorem \ref{t.1}]
If $A$ is a profinite algebra, which is left semiartinian (i.e. semilocal and $Jac(A)$ is right T-nilpotent), then $J=Jac(A)$ is nilpotent, $A/J$ is semisimple finite dimensional, and $A$ is right semiartinian too.
\end{theorem}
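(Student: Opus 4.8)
The plan is to pass to the dual coalgebra, reduce all three conclusions to the single assertion that the coradical filtration is finite, and then prove that assertion by contraposition: from an infinite coradical filtration I will manufacture a sequence in the radical witnessing the failure of right $T$-nilpotence. First I would write $A\cong C^*$ for a coalgebra $C$ (duality of profinite algebras and coalgebras) and recall the standard identifications $J=\mathrm{rad}(C^*)=C_0^\perp$ and, more generally, $J^n=C_{n-1}^\perp$. These give at once $\bigcap_n J^n=(\bigcup_n C_n)^\perp=0$ (the already-known ``Jacobson'' property), and the crucial equivalence that $J$ is nilpotent if and only if $C_{n-1}=C$ for some $n$, i.e. the coradical filtration is finite. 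Next, $A/J\cong C_0^*$; writing $C_0=\bigoplus_{i\in I}S_i$ as a sum of (necessarily finite-dimensional) simple subcoalgebras, we get $C_0^*\cong\prod_i S_i^*$, a product of finite-dimensional simple algebras, which is semisimple artinian precisely when $I$ is finite. Since $A$ is semilocal by hypothesis, $I$ is finite, $C_0$ is finite-dimensional, and $A/J$ is finite-dimensional semisimple; this is already one of the three conclusions. Finally, once nilpotence $J^N=0$ is established, the finite chain $A\supseteq J\supseteq\cdots\supseteq J^N=0$ has factors $J^k/J^{k+1}$ killed on the right by $J$, hence semisimple as right $A/J$-modules, so $A_A$ has finite Loewy length and $A$ is right semiartinian. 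Thus everything reduces to finiteness of the coradical filtration.

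Assume, for contradiction, that $C_n\subsetneq C_{n+1}$ for all $n$; I will show $J$ is not right $T$-nilpotent. Finite-dimensionality of $C_0$ means there are only finitely many simple right $C$-comodules $T_1,\dots,T_r$, and $C\cong\bigoplus_i E(T_i)^{\dim T_i}$ as a right comodule, a finite sum of indecomposable injectives. The length of the coradical filtration equals the supremum of the socle (Loewy) lengths of the $E(T_i)$, so some $E:=E(T_i)$ has infinite socle length $L_0\subsetneq L_1\subsetneq\cdots$ with $\bigcup_n L_n=E$. Here I would use that right comodules are exactly the rational left $A$-modules, with $J=C_0^\perp$ acting by $f\rightharpoonup m=\sum m_{(0)}f(m_{(1)})$, an action that lowers the Loewy filtration ($J\rightharpoonup L_k\subseteq L_{k-1}$), and that for $b_1,\dots,b_k\in J$ the convolution product $b_k\cdots b_1$ acts as the corresponding iterated $\rightharpoonup$.

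The heart of the argument is to produce a \emph{single} sequence $(b_n)_n$ in $C_0^\perp$ with $b_k\cdots b_1\neq0$ for every $k$. It is essential to notice what does \emph{not} suffice: although $J^k=C_{k-1}^\perp\neq0$ for all $k$, this only yields, for each $k$ separately, some length-$k$ product that is nonzero, with the factors depending on $k$. The real task is to thread these local witnesses into one sequence, and this is exactly where the finiteness of the number of simples is used. The plan is to build a descending ladder inside $E$: elements $x_k\in L_k\setminus L_{k-1}$ with a fixed nonzero socle element $x_0\in\mathrm{soc}(E)$, together with a (eventually periodic) family $b_n\in C_0^\perp$, arranged so that the left-iterated products $b_k\cdots b_1$ act by walking the ladder all the way down to $x_0$; then $b_k\cdots b_1\neq0$ for every $k$, and $J$ fails to be right $T$-nilpotent, the desired contradiction. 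Concretely, with only finitely many simples the succession of Loewy layers of the infinite tower $E$ must recur, producing a cyclic pattern of extensions that can be followed indefinitely, and the $b_n$ are chosen to ``peel'' one layer per step along this pattern.

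The main obstacle is precisely this threading. Infinite Loewy length furnishes, a priori, only configurations of each finite length, and merging them into one coherent infinite ladder requires combining the finiteness of the set of simples with the local finiteness of comodules through a K\"onig / inverse-limit (compactness) argument, together with careful bookkeeping of the cyclic period so that the left-products telescope onto the fixed socle element $x_0$; I expect this compactness mechanism is the general ``method for the finiteness of the coradical filtration'' promised in the abstract. One caution guides the construction: it is tempting to argue instead that $JE=E$ and invoke a Nakayama-type criterion, but this can fail even when $E$ has infinite Loewy length — for instance for the indecomposable injective over the path coalgebra of a cyclic quiver, whose top is a nonzero simple. This is why the ladder must be anchored at a nonzero socle element rather than at the top, and why the argument is phrased through nonvanishing of left-iterated convolution products directly.
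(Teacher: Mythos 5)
Your reduction is correct, and it is the same as the paper's: with $A=C^*$, semilocality forces $C_0$ to be finite dimensional and $A/J\cong C_0^*$ semisimple, nilpotence of $J$ is equivalent to finiteness of the coradical filtration, and once $J^N=0$ the finite filtration of $A$ with semisimple quotients gives right semiartinianness; contradicting right T-nilpotence is also a legitimate way to prove the key finiteness statement. (One small imprecision: $J^n=C_{n-1}^\perp$ holds only up to closure in the finite topology; what is true, and all you actually need, is $J^n\subseteq C_{n-1}^\perp$ together with $C_{n-1}=(J^n)^\perp$, the annihilator taken in $C$.) The genuine gap is that the step you yourself call ``the heart of the argument'' is never carried out: producing one sequence $(b_n)_n\subset C_0^\perp$ all of whose partial products $b_k\cdots b_1$ are nonzero. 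You correctly identify that $J^k\neq0$ for each $k$ only gives witnesses depending on $k$, and that these must be threaded into a single sequence; but the threading itself is delegated to a hoped-for ``cyclic pattern of extensions'', an ``eventually periodic'' choice of the $b_n$, and a K\"onig/inverse-limit compactness argument, none of which is proved — and whose hypotheses in fact fail.

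Concretely, K\"onig's lemma needs finite branching, and nothing in this situation is finite: even with $C_0$ finite dimensional, the Loewy layers $L_k/L_{k-1}$ of an indecomposable injective can be infinite dimensional (take the path coalgebra of the quiver with one vertex and infinitely many loops: there is a single simple comodule, yet $C_1/C_0$ is already infinite dimensional), so the layers need not ``recur'' — their dimensions can strictly increase — and no reason is offered why a periodic choice of the $b_n$ should exist. Likewise, the sets of $k$-tuples with nonvanishing product do form an inverse system under truncation, but an inverse limit of nonempty infinite sets can be empty; a substitute for compactness is required, and that substitute is exactly the paper's technical content. The paper proves an invariant-maintenance lemma (Lemma \ref{l.unbounded}): if $(lw(x^n))_n$ is unbounded, then there is a \emph{single} $a^*\in C_0^\perp$ such that $(lw(a^*\cdot x^n))_n$ is still unbounded. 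Its proof is where the work lies: Proposition \ref{p.2.1} locates inside $\Delta(y)$ a tensor component $a\otimes b$ with both factors of prescribed large Loewy length; running this along a rapidly growing subsequence produces witnesses $z^n$ of strictly increasing Loewy length, hence linearly independent, so one functional $a^*$ can be chosen to vanish on $C_0$ and equal $1$ on every $z^n$. Granting this lemma, your plan closes immediately: iterate it to get $a_1^*,a_2^*,\dots\in J$ with $(lw(a_k^*\cdots a_1^*\cdot x^n))_n$ unbounded for every $k$; then each partial product is nonzero and right T-nilpotence fails. (The paper itself finishes differently, applying the lemma to the product module $M=\prod_n C^*\cdot x^n$ to show that its Dickson torsion consists exactly of the sequences of bounded Loewy length, so $M$ is not semiartinian; but the lemma is the missing ingredient on either route.) A final small point: your Nakayama ``caution'' is off — for the path coalgebra of a cyclic quiver one has $J\cdot E=E$ for the indecomposable injective $E$, so its top is zero rather than a nonzero simple; this does not affect the main issue.
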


In an equivalent reformulation, for a semilocal profinite algebra, T-nilpotence implies nilpotence (see also Theorem \ref{t.2}). In proving this theorem, we introduce certain tools which might prove useful in the future in the study of the behavior of the coradical filtration of a coalgebra, and specifically the finiteness of the coradical filtration. In regards to this, we note that a problem that has attracted a good amount of interest lately is a 2003 conjecture due to Andruskiewitsch and Dascalescu \cite{AD}, which states that a co-Frobenius Hopf algebra $H$ (equivalently, a Hopf algebra with non-zero integral) has finite coradical filtration. This was known for pointed Hopf algebras from \cite{RH}, and it was proved in full generality and also for certain tensor categories (of subexponential growth) very recently in \cite{ACE} (see also \cite{AC} for a related finiteness statement; in fact, a Hopf algebra has a non-zero integral if and only if it has finite coradical filtration by the results of \cite{AD,ACE}). The problem is left open in \cite{ACE} for general Frobenius tensor categories, that is, for co-Frobenius coalgebras for which the category of comodules has a tensor structure. Hence, methods for deciding the finiteness of the coradical filtration could be of use in other situations. In particular, the methods proposed here (and used for the above mentioned main result), are contained in Proposition \ref{p.2.1} which concerns linear algebra properties of the comultiplication and tensor products, and Lemma \ref{l.unbounded} regarding a general property of sequences $(x_n)_n$ of elements in $C$, such that $(x_n)_n\not\subset C_k$ for any $k$. The proofs involve linear algebra arguments, and the exposition is kept to a general level, and only the basic results on coalgebras and their comodules are used.

\section{Semiartinian Profinite Algebras}

Let $A$ be a profinite algebra, that is $A=\lim\limits_{\stackrel{\leftarrow}{i}}A_i$ is an inverse limit of finite dimensional algebras $A_i$. Then there is a coalgebra $C$ such that $A=C^*$. It is proved in \cite{INT} that if the Dickson torsion part of any left $C^*$-module $M$ splits off in $M$, then $C^*$ is itself a torsion object with respect to this theory. That is, $C^*$ is left semiartinian. We aim to prove that furthermore, if $C^*$ is left semiartinian, then $C$ has finite coradical filtration, answering thus the questions left open in \cite{INT}.

For technical details on coalgebras and comodules, the and coradical filtration of a coalgebra, we refer to \cite{DNR}, and for basic module theory, to \cite{F}. Let $(C,\Delta,\varepsilon)$ be a coalgebra, $C^*$ its dual algebra, and let $M$ be a right $C$-comodule. Then $M$ is naturally a left $C^*$-module, which is a rational $C^*$-module. Recall that $M$ has a filtration $M_0\subseteq M_1\subset \dots \subset M_n\subset \dots$ called the coradical filtration, which is the Loewy series of $M$ regarded as a left $C^*$-module. Specifically, let $M_0$ be the socle of $M$, i.e the largest semisimple $C^*$-submodule of $M$; this is the first term $L_1(M)$ of the Loewy filtration of $M$. Inductively, if $L_n(M)=M_{n-1}$ is constructed then let $L_{n+1}(M)=M_{n}$ be the sub(co)module of $M$ such that $M_{n}/M_{n-1}$ is the socle of $M/M_{n-1}$. Thus, $M_n=L_{n+1}(M)$, the $n+1$'st term of the Loewy filtration of $M$ regarded as a left $C^*$-module. It is well known that for a right $C$-comodule $M$ (equivalently, a left rational $C^*$-module $M$), one has $\bigcup\limits_{n}M_n=M$. We denote by $C_0\subset C_1\subset C_2 \dots $ the coradical filtration of $C$; this is the same when $C$ is considered as a left $C$-comodule or as a right $C$-comodule, and the spaces $C_n$ are consequently subcoalgebras of $C$ (see \cite{DNR}). 

For a right $C$-comodule $M$, we write $lw(M)$ for the Loewy length of $M$, that is, $lw(M)=n+1$ if $n$ is the smallest number for which $M=M_n$; otherwise, we write $lw(M)=\infty$. Let $x\in C$. We write $lw(x)=lw(C^*\cdot x\cdot C^*)$ for the Loewy length (i.e. length of the coradical filtration) of the (finite dimensional) coalgebra generated by $x$. It is easy to see that in fact $lw(x)=lw(C^*\cdot x)=lw(x\cdot C^*)$, which justifies the notation. Note also that $lw(x)=n+1$ if and only if $x\in C_n\setminus C_{n-1}$ (since $C_n=L_{n+1}({}_{C^*}C)=L_{n+1}(C_{C^*})$).

For an element $x\in C_n$, we have $\Delta(x)=x_1\otimes x_2\in C_0\otimes C_n+C_1\otimes C_{n-1}+\dots C_{n}\otimes C_0$.  We next observe the following: for each $i+j=n$, $i,j\geq 0$, in any fixed tensor representation of $\Delta(x)=x_1\otimes x_2$ there is always a tensor monomial which is ``essentially'' in $C_i\otimes C_j$; this is somewhat folklore, but we include a short proof for sake of  completeness.

\begin{proposition}\label{p.2.1}
Let $x\in C$ such that $lw(x)>n$ (equivalently, $x\notin C_{n-1}$) and let $\Delta(x)=x_1\otimes x_2=\sum\limits_{i=1}^ta_i\otimes b_i$. Then for each $1\leq k\leq n$, there is some $i$ such that $lw(a_i)\geq k$ and $lw(b_i)\geq n-k$.
\end{proposition}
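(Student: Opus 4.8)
The plan is to prove the contrapositive, relying on the behaviour of the coradical filtration under the wedge product. Recall (see \cite{DNR}) that for subspaces $U,V\subseteq C$ one sets $U\wedge V=\Delta^{-1}(U\otimes C+C\otimes V)$, and that the filtration obeys the identity $C_p\wedge C_q=C_{p+q+1}$ for $p,q\geq 0$; moreover $C_p\wedge C_{-1}=\Delta^{-1}(C_p\otimes C)=C_p$ by the counit (since $c=(\mathrm{id}\otimes\varepsilon)\Delta(c)$), so the identity persists for $p,q\geq -1$ under the convention $C_m=0$ for $m<0$. I will freely use the dictionary from the preceding remarks: for $y\in C$ one has $lw(y)\leq m$ if and only if $y\in C_{m-1}$, equivalently $lw(y)\geq m$ if and only if $y\notin C_{m-2}$.

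Fix $k$ with $1\leq k\leq n$ and suppose, aiming at a contradiction, that no index $i$ satisfies both $lw(a_i)\geq k$ and $lw(b_i)\geq n-k$. Then for every $i$ at least one of $lw(a_i)\leq k-1$ or $lw(b_i)\leq n-k-1$ holds, that is, $a_i\in C_{k-2}$ or $b_i\in C_{n-k-2}$. I would discard the zero terms and split the sum $\Delta(x)=\sum_{i=1}^{t}a_i\otimes b_i$ accordingly: the monomials with $a_i\in C_{k-2}$ lie in $C_{k-2}\otimes C$, while each of the remaining ones has $b_i\in C_{n-k-2}$ and so lies in $C\otimes C_{n-k-2}$. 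Hence $\Delta(x)\in C_{k-2}\otimes C+C\otimes C_{n-k-2}$, which by definition of the wedge means $x\in C_{k-2}\wedge C_{n-k-2}$.

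Now I would invoke the wedge identity. Since $k\geq 1$ the first index satisfies $k-2\geq -1$, and after replacing the second index by $-1$ when $n-k-2<-1$ (legitimate because $C_m=0$ for $m<0$), both indices are $\geq -1$; thus $C_{k-2}\wedge C_{n-k-2}=C_{n-3}$ when $k\leq n-1$, and $C_{k-2}\wedge C_{-2}=C_{k-2}\wedge C_{-1}=C_{n-2}$ when $k=n$. In every case $x\in C_{n-2}\subseteq C_{n-1}$, contradicting the hypothesis $lw(x)>n$, i.e. $x\notin C_{n-1}$. This yields the required index $i$ for each $k$.

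The real engine is the standard wedge identity, which (if one prefers not to cite it) follows by induction from coassociativity together with the recursion $C_m=C_0\wedge C_{m-1}$; the only point genuinely needing care is the index bookkeeping, namely the shift $lw(y)\geq m\Leftrightarrow y\notin C_{m-2}$ between Loewy length and filtration level, and the two degenerate extremes $k=1$ and $k=n$, where one tensor factor collapses to all of $C$ and the wedge is read off directly from the counit. I note in passing that the argument in fact produces $x\in C_{n-2}$ (even $C_{n-3}$ for interior $k$), so the bounds $lw(a_i)\geq k$, $lw(b_i)\geq n-k$ of the statement are comfortably within reach and no sharper estimate on $\Delta(x)$ is required.
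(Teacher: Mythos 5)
Your proof is correct and takes essentially the same route as the paper: negate the conclusion, observe that every (nonzero) tensor monomial of $\Delta(x)$ then lies in $C_?\otimes C + C\otimes C_?$, and invoke the wedge identity for the coradical filtration to force $x$ too low in the filtration, contradicting $x\notin C_{n-1}$. The only difference is cosmetic: the paper uses the looser but sufficient inclusions $a_i\in C_{k-1}$, $b_i\in C_{n-k-1}$, so that $x\in C_{k-1}\wedge C_{n-k-1}=C_0^{\wedge k}\wedge C_0^{\wedge (n-k)}=C_{n-1}$ comes out in one line with all indices nonnegative, whereas your tighter inclusions $a_i\in C_{k-2}$, $b_i\in C_{n-k-2}$ buy a slightly stronger conclusion ($x\in C_{n-2}$) at the cost of the negative-index conventions and the case split at $k=n$.
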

\begin{proof}
Assume otherwise; this means that for each $i$, either $lw(a_i)< k$ so $a_i\in C_{k-1}$ or $lw(b_i)< n-k$ so $b_i\in C_{n-k-1}$. In either case, we get $a_i\otimes b_i\in C_{k-1}\otimes C+C\otimes C_{n-k-1}=C_{k-1}\wedge C_{n-k-1}$. Hence $\Delta(x)\in C_{k-1}\wedge C_{n-k-1}=C_0^{\wedge k}\wedge C_0^{\wedge n-k}=C_0^{\wedge n}=C_{n-1}$ (see e.g. \cite[Section 3.1]{DNR}), which is a contradiction to $x\notin C_{n-1}$. 
\end{proof}

\begin{remark}
For $x\in C$ let $(a_k)_{k=1,\dots,n}$ be a (finite) basis for $C^*\cdot x$ and write $\Delta(x)=\sum\limits_{k}a_k\otimes b_k$. Then $(b_k)_k$ are linearly independent. Indeed, if there is $b_t=\sum\limits_{k\neq t}\lambda_kb_k$ then $\Delta(x)=\sum\limits_{k\neq t}a_k\otimes b_k+\sum\limits_{k\neq t}a_t\otimes \lambda_kb_k=\sum\limits_{k\neq t}(a_k+\lambda_ka_t)\otimes b_k$. This shows that for each $c^*\in C^*$ we have $c^*\cdot x=\sum\limits_{k\neq t}c^*(b_k)(a_k+\lambda_ka_t)$, and so $C^*\cdot x\subseteq {\rm Span}\{a_k+\lambda_ka_t|k\neq t\}$ and this shows that $\dim(C^*\cdot x)<n$, a contradiction.
\end{remark}

For $X\subseteq C$, we use the notation $X^\perp \subseteq C^*$ for the set $X^\perp=\{c^*\in C^* | c^*(X)=0\}$. Let $J=C_0^\perp$; then $J$ is the Jacobson radical of $C^*$.

\begin{lemma}\label{l.unbounded}
Let $(x^n)_n$ be a family of elements in $C$, and such that $(lw(x^n))_n$ is unbounded. Then there is $a^*\in C^*$ such that $a^*\in C_0^\perp$ and $(lw(a^*\cdot x^n))_n$ is unbounded.
\end{lemma}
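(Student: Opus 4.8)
The plan is to construct the functional $a^*$ by hand, committing only finitely many linear conditions at a time, so that along a subsequence $x^{n_1},x^{n_2},\dots$ we force $lw(a^*\cdot x^{n_k})>k$ (which makes the whole sequence $(lw(a^*\cdot x^n))_n$ unbounded). Fix a basis of $C$ extending a basis of the coradical $C_0$; any assignment of scalars to these basis vectors defines an element of $C^*$, and if we assign $0$ to every basis vector of $C_0$ the resulting functional automatically lies in $J=C_0^\perp$. Since $a^*\cdot x^{n}=\sum x^n_1\,a^*(x^n_2)$ depends only on the values of $a^*$ on the finite-dimensional span $W_n$ of the ``second legs'' $x^n_2$ of $\Delta(x^n)$, committing the finitely many coordinate directions spanning $W_n$ freezes the element $a^*\cdot x^n$ for good. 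The construction is thus a greedy process: at stage $k$ we pick a new index $n_k$, choose values on finitely many so-far-free coordinates to make $lw(a^*\cdot x^{n_k})$ large, and then never disturb those coordinates again.

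The engine of the argument is a dimension estimate for the boost step. Writing $M=C^*\cdot x$ for $x$ with $lw(x)=\ell$, I first record that $J\cdot x=JM=\mathrm{rad}(M)$ (using that $J$ is a two-sided ideal and $M=C^*\cdot x$), and that $M\cap C_k$ is exactly the $(k{+}1)$-st socle layer of $M$. Hence $M/(M\cap C_k)$ is a module of Loewy length $\ell-1-k$, whose radical series is a strictly descending chain of that same length; counting the strict steps below the top gives
\[
\dim\big((J\cdot x + C_k)/C_k\big)=\dim\big(J\cdot(M/(M\cap C_k))\big)\ \ge\ \ell-k-2 .
\]
(Proposition \ref{p.2.1} gives an alternative, more hands-on way to exhibit high Loewy length contributions inside $\Delta(x)$; the radical-series count is cleaner here because it automatically bypasses the cancellation problem among the terms of $\sum a_i\,a^*(b_i)$.)

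With this in hand the boost step is pure codimension counting. Suppose at stage $k$ we have already committed $d_k$ coordinates outside $C_0$; the admissible perturbations of $a^*$ are exactly the $h\in J$ vanishing on the committed directions, a subspace of $J$ of codimension at most $d_k$. The linear map $h\mapsto\overline{h\cdot x^{n_k}}\in C/C_k$ has image of dimension at least $lw(x^{n_k})-k-2$ by the estimate above, so if we choose $n_k>n_{k-1}$ (possible since $(lw(x^n))_n$ is unbounded) with $lw(x^{n_k})>d_k+k+2$, then the restriction of this map to the admissible perturbations still has positive-dimensional image. Thus some admissible $h$ satisfies $h\cdot x^{n_k}\notin C_k$, and replacing the current partial $a^*$ by $a^*+h$ (which only alters coordinates that were still free) achieves $a^*\cdot x^{n_k}\notin C_k$, i.e.\ $lw(a^*\cdot x^{n_k})>k$, without changing any previously frozen value. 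We then commit the finitely many coordinate directions spanning $W_{n_k}$ to freeze this gain.

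Carrying out all stages and assigning $0$ to every coordinate never committed yields $a^*\in J=C_0^\perp$ with $lw(a^*\cdot x^{n_k})>k$ for every $k$, so $(lw(a^*\cdot x^n))_n$ is unbounded, as required. The one delicate point -- and the main obstacle -- is exactly that the $x^{n_k}$ may overlap heavily (the finite-dimensional subcoalgebras they generate are far from independent), so coordinates one would want for boosting $x^{n_k}$ may already be frozen; the dimension estimate is precisely what guarantees that enough ``fresh'' room survives, provided $lw(x^{n_k})$ is taken large relative to the number of commitments made so far.
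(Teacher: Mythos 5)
Your proposal is correct, but it runs on a genuinely different engine than the paper's proof. The paper makes a one-shot construction: it passes to a subsequence with $lw(y^{n+1})>2\,lw(y^n)+2$, uses Proposition \ref{p.2.1} to locate in each $\Delta(y^n)$ a ``balanced'' monomial $a^n_{ks}\otimes b^n_{ks}$ whose two legs both have Loewy length $>lw(y^{n-1})$, shows via the layered basis of $C^*\cdot y^n$ (and the Remark on independence of the second legs) that the elements $z^n=b^n_{ks}$ have strictly increasing Loewy lengths and hence form, together with a basis of $C_0$, a linearly independent set, and then prescribes $a^*$ to vanish on $C_0$ and equal $1$ on each $z^n$. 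You instead build $a^*$ by a greedy, stage-by-stage process whose engine is the dimension estimate $\dim\bigl((J\cdot x+C_k)/C_k\bigr)\ge lw(x)-k-2$, obtained from the strictly descending radical series of $C^*\cdot x/(C^*\cdot x\cap C_k)$; combined with the codimension count over the frozen coordinates, this replaces both Proposition \ref{p.2.1} and the independence argument, and, as you note, it dissolves the cancellation problem among the terms of $\sum a_i\,a^*(b_i)$ that the paper must handle explicitly. Your estimate is valid: $J\cdot x=\mathrm{rad}(C^*\cdot x)$, $C^*\cdot x\cap C_k=L_{k+1}(C^*\cdot x)$, and radical length equals socle length for finite-dimensional rational modules are all standard facts that the paper itself uses implicitly (e.g.\ ``$(lw(y^n))_n$ bounded iff killed by a power of $J$'' in the proof of Theorem \ref{t.1}). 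Two small points need patching but do not affect the argument: in the boost step what you need is $(a^*+h)\cdot x^{n_k}\notin C_k$ rather than $h\cdot x^{n_k}\notin C_k$, so either take $h=0$ when $a^*\cdot x^{n_k}\notin C_k$ already, or observe that the affine subspace $\overline{a^*\cdot x^{n_k}}+\phi(J')$ of $C/C_k$ has positive dimension and so contains a nonzero point; and ``the coordinates spanning $W_{n_k}$'' should read ``finitely many basis vectors whose span contains $W_{n_k}$''. As for what each approach buys: the paper's stays entirely at the level of elementary linear algebra of the comultiplication and exhibits an explicit independent set on which $a^*$ is prescribed, while yours is more modular and robust to heavy overlap among the subcoalgebras $C^*\cdot x^n$ (no doubling subsequence is needed), at the cost of invoking the radical/socle dictionary for finite-dimensional rational modules.
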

\begin{proof}
We may obviously find a subsequence $y^n=x^{k_n}$ of $(x^n)$ such that $lw(y^{n+1})>2lw(y^n)+2$, and $lw(y^1)>2$. In particular, $lw(y^n)> 2^{n}$. Also pick $y^0\in C_0$, $y^0\neq 0$, so $lw(y^1)>2lw(y^0)+2$. We construct a set $\Bb$ as follows. First, let $\Bb_0$ be a basis of $C_0$. For each $n\geq 1$, we pick a basis $a^n_{ji}$ of $C^*\cdot y^n$ such that for each $j$, $(a^n_{ji})_i$ is a basis in $L_j(C^*\cdot y^n)/L_{j-1}(C^*\cdot y^n)=C^*\cdot y^n\cap C_j/C^*\cdot y^n\cap C_{j-1}$. Thus, $lw(a^n_{ji})=j+1$. Let $\Delta(y^n)=\sum\limits_{j,i}a^n_{ji}\otimes b^n_{ji}$. Then the $(b^n_{ji})_{i,j}$ are linearly independent too by the previous remark. For $c^*\in C^*$, we have $c^*\cdot y^n=\sum\limits_{j,i}c^*(b^n_{ji})a^n_{ji}$. We note that if $c^*(b^n_{ks})\neq 0$ for some $k$ and some $s$ then $c^*\cdot y^n\notin C_{k-1}$. Indeed, the choice of the $a^n_{ji}$'s shows that the set $\{a^n_{ji}|j\leq k-1\}$ is a basis of $C^*\cdot y^n\cap C_{k-1}$, and so if $c^*\cdot y^n\in C_{k-1}$ it must be a linear combination of the elements $\{a^n_{ji}|j\leq k-1\}$, and so $c^*(b^n_{ks})=0$ in this case. By Proposition \ref{p.2.1}, since $lw(y^n)>2lw(y^{n-1})+2$, there is some element $z^n=b^n_{ks}$ for some $k$ and some $s$ such that $lw(a^n_{ks})\geq lw(y^{n-1})+1$ and $lw(z^n)=lw(b^n_{ks})\geq 2lw(y^{n-1})+2 - lw(y^{n-1})-1= lw(y^{n-1})+1$ (of course, $k,s$ depend on $n$). Because for each $n$ the $a^n_{ji}$'s are independent, it follows that $b^n_{ks}\in y^n\cdot C^*$ (to see this, for example, pick $c^*\in C^*$ equal to $1$ on $a^n_{ks}$ and $0$ on the other $a^n_{ji}$'s and then $b^n_{ks}=y^n\cdot c^*$); therefore $lw(z^n)\leq lw(y^n)$. \\
Now let $\Bb=\Bb_0\cup\{z^n|n\geq 1\}$. Note that $z^1\notin {\rm Span}(\Bb_0)=C_0$ since $lw(z^1)\geq lw(y^0)+1>0$. By the above equalities, we see that $lw(z^{n-1})\leq lw(y^{n-1})<lw(z^n)$ so $lw(z^n)>lw(z^{n-1})$ for $n>1$. Hence, it follows that $z^n\notin {\rm Span}(\Bb_0\cup\{z^1,z^2,\dots,z^{n-1}\})$, since otherwise as $z_i\in C_{lw(z^{i})-1}\subseteq C_{lw(z^{n-1})-1}$ for $i<n$ it would follow that $z^{n}\in C_{lw(z^{n-1})-1}$, i.e. $lw(z^n)\leq lw(z^{n-1})$. Therefore, we get that the elements of $\Bb$ are linearly independent. Hence, we can find $a^*\in C^*$ such that $a^*\vert_{B_0}=0$ and $a^*(z^n)=1$ for all $n$. Then $a^*\in C_0^\perp$, and for each $n$, since $z^n=b^n_{ks}$ and $a^*(z^n)=a^*(b^n_{ks})\neq 0$, by the above remarks we have $a^*\cdot y^n\notin C_{k-1}$, i.e. $lw(a^*\cdot y^n)\geq k$. But since by construction $k+1=lw(a^n_{ks})\geq lw(y^{n-1})+1>2^{n-1}+1$, we get $lw(a^*\cdot y^n)\geq k>2^{n-1}$, and so $(lw(a^*\cdot y^n))_n$ is unbounded. In particular, since $(y^n)_n$ is a subsequence of $(x^n)_n$, the proof is finished. 
\end{proof}

We note that this is what is needed to answer the conjecture of \cite{INT}. 
For any left $C^*$-module $M$ let us denote $\Tt_D(M)$ the Dickson torsion (i.e. the semiartinian part) of $M$. We recall from \cite{INT} that if $\Tt_D(M)$ splits off in $M$ for any left $C^*$-module $M$ then $C_0$ is finite dimensional, and $C^*$ is necessarily a left semiartinian ring. 

\begin{theorem}\label{t.1}
Let $C$ be a coalgebra such that $C^*$ is left semiartinian. Then $C$ has finite coradical filtration.
\end{theorem}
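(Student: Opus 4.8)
The plan is to prove the statement by contradiction: I assume $C$ does \emph{not} have finite coradical filtration and derive a sequence in $J=C_0^\perp$ that violates the right T-nilpotence of $J$. The first step is a reduction. Since $\bigcup_n C_n=C$, the coradical filtration is finite exactly when $C=C_N$ for some $N$, i.e. exactly when $lw$ is bounded on $C$ (recall $lw(x)=n+1$ iff $x\in C_n\setminus C_{n-1}$). Moreover each proper inclusion $C_{n-1}\subsetneq C_n$ supplies an element of Loewy length precisely $n+1$, so if the filtration is infinite I can choose, for every $n$, an element $x^n\in C_n\setminus C_{n-1}$, giving a family $(x^n)_n$ with $(lw(x^n))_n$ unbounded.

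The engine of the proof is to feed this family repeatedly through Lemma \ref{l.unbounded}. Applied to $(x^n)_n$, the lemma produces $a^*_1\in C_0^\perp=J$ with $(lw(a^*_1\cdot x^n))_n$ unbounded. The output family $(a^*_1\cdot x^n)_n$ again has unbounded Loewy length, so it is itself a legitimate input for the lemma, yielding $a^*_2\in J$ with $(lw(a^*_2a^*_1\cdot x^n))_n$ unbounded. Iterating, I construct a sequence $(a^*_k)_{k\ge 1}$ of elements of $J$ such that for \emph{every} $k$ the family $(lw(a^*_ka^*_{k-1}\cdots a^*_1\cdot x^n))_n$ is unbounded; in particular, for each fixed $k$ the element $a^*_k\cdots a^*_1\cdot x^n$ is nonzero for infinitely many $n$.

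To close the argument I use the hypothesis that $C^*$ is left semiartinian, which (as recalled in the introduction) is equivalent to $C^*/J$ being semiartinian together with $J$ being right T-nilpotent. Right T-nilpotence applied to the sequence $(a^*_k)_k\subseteq J$ furnishes an index $m$ with $a^*_ma^*_{m-1}\cdots a^*_1=0$ in $C^*$. Acting on the $x^n$ then forces $a^*_m\cdots a^*_1\cdot x^n=0$ for all $n$, so the family $(lw(a^*_m\cdots a^*_1\cdot x^n))_n$ vanishes identically, contradicting its unboundedness at stage $m$. Hence $lw$ must be bounded and $C$ has finite coradical filtration.

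I expect that the genuinely hard content is already packaged in Lemma \ref{l.unbounded} (and the linear-algebra input of Proposition \ref{p.2.1}); given those, the remaining obstacle is merely organizational, namely verifying that the conclusion of the lemma at each stage is exactly the hypothesis needed at the next, so that the iteration never stalls, and then matching the order of multiplication $a^*_m\cdots a^*_1$ against the definition of \emph{right} T-nilpotence, so that the annihilation of a finite product collides cleanly with the persistently unbounded Loewy lengths.
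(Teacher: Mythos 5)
Your proof is correct, and it is a genuinely different assembly of the same key ingredient. Both arguments start from a family $(x^n)_n$ with unbounded Loewy lengths and both rest entirely on Lemma \ref{l.unbounded}, but the paper applies that lemma exactly \emph{once}, inside a module-theoretic argument: it forms $M=\prod\limits_n C^*\cdot x^n$, uses the lemma to show that the Dickson torsion of $M$ equals $L_\omega(M)=\{(y^n)_n\mid (lw(y^n))_n \mbox{ is bounded}\}$ (a hypothetical $y\in\Tt_D(M)\setminus L_\omega(M)$ would yield, via a simple subquotient of $C^*y/L_\omega(C^*y)$, a family $(c^*\cdot y^n)_n$ of unbounded Loewy length sent to bounded length by every $a^*\in J$, against the lemma), and then concludes that $M$ is not semiartinian because $(x^n)_n\notin L_\omega(M)$, contradicting the fact that every left module over a left semiartinian ring is semiartinian. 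You instead iterate the lemma to build a sequence $(a^*_k)_k\subseteq J$ whose left-ordered finite products never annihilate the family, and collide this with right T-nilpotence of $J$. The points you flagged as needing verification do check out: the left $C^*$-action on $C$ satisfies $a^*\cdot(b^*\cdot x)=(a^*b^*)\cdot x$, so the products accumulate with the newest factor on the left, exactly matching the paper's convention $b_kb_{k-1}\cdots b_1=0$ for right T-nilpotence; the implication (left semiartinian $\Rightarrow$ $J$ right T-nilpotent) is precisely the characterization the paper quotes in the introduction and reuses in Theorem \ref{t.2}; and the lemma's conclusion (unboundedness of the full family $(lw(a^*\cdot x^n))_n$) is exactly the hypothesis needed at the next stage, so the induction never stalls. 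The trade-off: your route is shorter, bypasses the product module entirely, and proves the ``T-nilpotence implies nilpotence'' formulation directly (essentially Theorem \ref{t.2}(iv)$\Rightarrow$(iii)), at the cost of a dependent-choice construction of the infinite sequence $(a^*_k)_k$; the paper's route invokes the lemma a single time and, as a by-product, computes the Dickson torsion of an explicit non-semiartinian module, a computation of independent interest that ties back to the splitting results of \cite{INT} which motivated the whole problem.
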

\begin{proof}
By contradiction, assume otherwise. Then there is $(x^n)_n\subset C$ a sequence of elements of $C$ such that $lw(x^n)=n$ (i.e. $x^n\in C_n\setminus C_{n-1}$, $n\geq 1$). Let $M=\prod\limits_nC^*\cdot x^n$.  We prove  that $\Tt_D(M)=\{(y^n)_n\in M|(lw(y^n))_n{\rm\,is\,bounded}\}$. We have $(lw(y^n))_n$ is bounded if and only if there is $k$ such that $J^k\cdot y^n=0,\,\forall n$, equivalently, $J^k\cdot (y^n)_n=0$ ($J=C_0^\perp$). This is equivalent to saying that $C^*\cdot (y^n)_n$ has finite Loewy length, i.e. $(y^n)_n\in L_\omega(M)$ ($\omega$ is the first countable ordinal). Thus $L_\omega(M)=\{(y^n)_n|(lw(y^n))_n{\rm\,is\,bounded}\}$. \\
Now, to prove $\Tt_D(M)=L_\omega(M)$, assume, by contradiction, that there is $y=(y^n)_n\in \Tt_D(M)\setminus L_\omega(M)$. Then the Loewy length $lw(y)$ of $y$ (i.e. of $C^*\cdot y$) is at least $\omega+1$ . Since $C^*y/L_\omega(C^*y)\neq 0$, there is a simple submodule of $C^*y/L_\omega(C^*y)$, and let this submodule be generated by $c^*\cdot y$ modulo $L_\omega(M)$. Thus $c^*\cdot y\notin L_\omega(M)$ so $(lw(c^*\cdot y^n))_n$ is unbounded, but $J\cdot c^*\cdot y\subseteq L_\omega(M)$. Hence, for any $a^*\in J$, $(lw(a^*\cdot c^*y^n))_n$ is bounded. But this is a situation which is in contradiction to Lemma \ref{l.unbounded}.\\
Finally, note that $M$ is not semiartinian, since $(lw(x^n))_n$ is unbounded, so $(x^n)_n\notin L_\omega(M)=\Tt_D(M)$. This is a contradiction to $C^*$ left semiartinian (since if $C^*$ is left semiartinian, then any left $C^*$-module is semiartinian). 
\end{proof}

We first recall from \cite[Proposition 1.1]{INT} that if $A$ is a profinite algebra, so $A=C^*$ for some coalgebra $C$, and  $A$ is left semiartinian, then $A$ has only finitely many isomorphism types of simple modules. Indeed, let  $(S_i)_i$ is a set of representatives for the isomorphism types of the simple left $C$-comodules, and let $P=\prod\limits_{i\in I}S_i^*$ as a left $A$-module. Then it is not difficult to see that $\Tt_D(P)=\bigoplus\limits_{i\in I}S_i^*$, so $\Tt(P)=P$ only if $I$ is finite. (in fact, $\Tt(P)$ is a direct summand in $P$ only if $I$ is finite). Combining the above theorem with the results of \cite{INT} we have

\begin{corollary}\label{c.fin}
$A$ be a profinite algebra, so $A=C^*$ where $C$ is a coalgebra. The following assertions are equivalent:\\
(i) The semiartinian submodule (Dickson torsion) of every left $A$-module splits off.\\
(ii) The semiartinian submodule (Dickson torsion) of every right $A$-module splits off.\\
(iii) $A$ is left semiartinian.\\
(iv) $A$ is right semiartinian.\\
(v) $C$ is almost finite (i.e. $C_0$ is finite dimensional, equivalently, there are only finitely many isomorphism types of simple $A$-modules) and $C$ has finite coradical filtration, equivalently, the Jacobson  radical of $A=C^*$ is nilpotent. 
\end{corollary}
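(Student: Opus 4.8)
The plan is to establish the equivalence by running a cycle of implications on the left side, then invoking the self-duality of condition (v) to transfer everything to the right side. The two substantive inputs are already in hand: Theorem \ref{t.1} (left semiartinianness forces a finite coradical filtration) and the results recalled from \cite{INT} (splitting of the Dickson torsion forces left semiartinianness together with finiteness of $C_0$, and a left semiartinian profinite algebra has only finitely many isomorphism types of simples). The corollary is then essentially an assembly of these, and the only genuine care needed is to keep every ingredient left–right symmetric.

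First I would prove the chain (i) $\Rightarrow$ (iii) $\Rightarrow$ (v) $\Rightarrow$ (iii) $\Rightarrow$ (i). The implication (i) $\Rightarrow$ (iii) is exactly the \cite{INT} result recalled just before Theorem \ref{t.1}. For (iii) $\Rightarrow$ (v): Theorem \ref{t.1} gives that $C$ has finite coradical filtration, and the recalled result [Proposition 1.1]\cite{INT} on finitely many simple types gives that $C_0$ is finite dimensional; the two internal equivalences of (v) come from the standard wedge duality $C_{n-1}^\perp=(C_0^\perp)^n=J^n$ (so $C=C_{n-1}$ for some $n$ iff $J^n=0$), using the iterated-wedge description $C_{n-1}=C_0^{\wedge n}$ already employed in the proof of Proposition \ref{p.2.1}, together with the decomposition of the coradical $C_0$ into coefficient coalgebras of the simple comodules. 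For (v) $\Rightarrow$ (iii): nilpotence of $J$ makes $J$ in particular right T-nilpotent, while $C_0$ finite dimensional makes $C^*/J\cong C_0^*$ a finite dimensional semisimple algebra (since $C_0$ is cosemisimple); by the characterization recalled in the introduction ($R$ is left semiartinian iff $R/J$ is semiartinian and $J$ is right T-nilpotent), $C^*$ is left semiartinian. Finally, (iii) $\Rightarrow$ (i) is immediate: if $C^*$ is left semiartinian then every left $C^*$-module is semiartinian, so $\Tt_D(M)=M$ for every $M$ and the torsion trivially splits off as the whole module.

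To obtain (ii) and (iv) I would exploit the self-duality of (v). The coradical filtration $(C_n)_n$ is the same whether $C$ is viewed as a left or a right comodule, and $J=C_0^\perp$ together with its powers are two-sided ideals; hence each of the statements ``$C$ has finite coradical filtration,'' ``$C_0$ is finite dimensional,'' and ``$J$ is nilpotent'' is invariant under the left–right switch. Consequently the entire chain above runs verbatim with ``left'' replaced by ``right'' throughout: Theorem \ref{t.1} and the \cite{INT} results have right-handed analogues obtained by passing to the co-opposite coalgebra (whose coradical filtration coincides with that of $C$), and nilpotence of $J$ yields left T-nilpotence just as readily. This gives (ii) $\Leftrightarrow$ (iv) $\Leftrightarrow$ (v), and combining the two chains through the common vertex (v) yields the full equivalence of (i)–(v).

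I expect the only point requiring attention to be the bookkeeping inside (v): cleanly recording the identity $J^n=C_{n-1}^\perp$ and the equivalence between finite dimensionality of $C_0$ and finitely many simple types, so that the self-duality argument can be invoked without re-running the proof of Theorem \ref{t.1} on the right. The genuinely hard content has already been isolated in Theorem \ref{t.1}; the remaining subtlety is purely one of verifying that every ingredient used is symmetric under the left–right exchange.
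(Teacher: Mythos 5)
Your proposal is correct and takes essentially the same route as the paper: the cycle (i) $\Rightarrow$ (iii) $\Rightarrow$ (v) $\Rightarrow$ (i) built from Theorem \ref{t.1} and the recalled results of \cite{INT}, with (ii) and (iv) obtained by left--right symmetry of condition (v). The only cosmetic difference is that you close the cycle by invoking the characterization ``$R$ left semiartinian iff $R/J$ semiartinian and $J$ right T-nilpotent,'' whereas the paper exhibits the finite filtration $0\subseteq J^{n-1}\subseteq \dots \subseteq J\subseteq A$ with semisimple quotients directly; these are interchangeable one-line arguments.
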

\begin{proof}
(i) $\Rightarrow$ (iii) By \cite[Theorem 3.3]{INT}, if the semiartinian submodule of every  left $A$-module splits off, then $A$ is left semiartinian. \\
(iii) $\Rightarrow$ (v) As noted above, by \cite[Proposition 1.1]{INT} $C$ has only finitely many types of isomorphism of simple comodules, so $C_0$ is finite dimensional. By the previous Theorem \ref{t.1}, $C$ has finite coradical filtration. \\
(v) $\Rightarrow$ (i) is obvious, since if $C_0$ is finite dimensional, then $A/J$ is semisimple, and $J^k/J^{k+1}$ is semisimple for all $k$ (since it is annihilated by $J$). Since $J^n=0$, $0\subseteq J^{n-1}\subseteq \dots J\subseteq A$ is a finite filtration of $A$ with semisimple modules, so $A$ is semiartinian.\\
(i) $\Leftrightarrow$ (ii) $\Leftrightarrow$ (iv) follow by symmetry.
\end{proof}

In regard to the algebras characterized by the above Corollary, it is natural to ask what other ring theoretical properties does it have. More generally, one can ask about ring theoretical properties involving nil or nilpotence of the Jacobson radical for a profinite algebra. For example, if $C$ is a coalgebra, then idempotents always lift modulo the Jacobson radical of $C^*$ (see \cite[Proposition 2.2.2]{R} and \cite[Remark 1.8]{DIT}). This raises the question of when is $C^*$ semiperfect. Using this, it is shown in \cite{DIT} that $C^*$ is semiperfect if and only if $C$ is almost finite, equivalently, $C^*$ is semilocal. We note that this also follows directly: : if $C^*$ is semilocal, then every simple module is rational, since $C^*/J$ is finite dimensional semisimple. By \cite[Lemma 1.4]{I}, if $A=C^*$ is a profinite algebra dual to the coalgebra $C$, every simple rational left $C^*$-module $S$ has a projective cover; it is obtained via the canonical morphism $E(S^*)^*\rightarrow S$. This shows that $C^*$ is semiperfect by an equivalent.

A basic question would be to understand simple modules over such an algebra $A=C^*$. A class of simple modules of particular interest are, of course, the rational simple modules. Quite interestingly also vis-a-vis of the semiperfect property, by \cite[Proposition 1.9]{DIT}, the simple rational modules over $C^*$ are distinguished by the following property: they are the only simple $C^*$-modules which have a projective cover. At the same time, if $C^*$ is not semilocal, equivalently, there are infinitely many types of isomorphism of simple rational modules, then by \cite[Lemma 6.1]{HIT} it must also have simple non-rational modules. We can thus summarize all these results in \cite{DIT,HIT,INT} and in the above Theorem \ref{t.1} in the following 

\begin{theorem}\label{t.2}
(I) Let $A$ be a profinite algebra, and let $C$ be a coalgebra such that $A=C^*$. The following are equivalent:\\
(i) $A$ is semilocal.\\
(ii) Every simple $A$-module is rational.\\
(iii) $A$ is semiperfect. \\
(II) Let $A$ be a profinite algebra. Then the following statements are equivalent:\\
(i) $A$ is left or right semiartinian.\\
(ii) $A$ is left or right perfect.\\
(iii) $A$ is semilocal and semiprimary, i.e. the Jacobson radical of $A$ is nilpotent.\\
(iv) $A$ is semilocal and $Jac(A)$ is left or right T-nilpotent.
\end{theorem}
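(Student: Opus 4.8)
The plan is to assemble Theorem \ref{t.2} from three ingredients: the general ring-theoretic equivalences recalled in the Introduction, the results of \cite{DIT,I,HIT} on semiperfectness and on rationality of simple modules, and the new Corollary \ref{c.fin} (which rests on Theorem \ref{t.1}). The bridge between the coalgebra and the ring picture is the identification $J=C_0^\perp$ and $A/J\cong C_0^*$; since $C_0$ is cosemisimple, $C_0^*\cong\prod_i D_i^*$ is a product of finite dimensional simple algebras, and this is semisimple artinian exactly when the product is finite, i.e. when $C_0$ is finite dimensional. Thus for a profinite algebra ``semilocal'' means precisely ``$C$ almost finite,'' and I would use this translation throughout.

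For Part (I), I would run the cycle (i)$\Rightarrow$(ii)$\Rightarrow$(iii)$\Rightarrow$(i). For (i)$\Rightarrow$(ii): if $A$ is semilocal then $C_0$ is finite dimensional, so $A/J\cong C_0^*$ is finite dimensional semisimple; any simple $A$-module is a simple $A/J$-module, hence a finite dimensional $C_0^*$-module, which is automatically a rational $C_0$-comodule (as $C_0$ is finite dimensional) and therefore rational as a $C^*$-module. For (ii)$\Rightarrow$(iii): by \cite[Lemma 1.4]{I} every simple rational module $S$ has a projective cover, so if all simples are rational then every simple $A$-module has a projective cover, and by the Bass characterization this is equivalent to $A$ being semiperfect. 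For (iii)$\Rightarrow$(i): semiperfect rings are by definition semilocal. (An alternative closing of the loop is the contrapositive of \cite[Lemma 6.1]{HIT}: if $A$ is not semilocal there are infinitely many types of simple rational modules, which forces a non-rational simple module and contradicts (ii); this yields (ii)$\Rightarrow$(i) directly, with (i)$\Leftrightarrow$(iii) then being the semilocal $\Leftrightarrow$ semiperfect statement of \cite{DIT}.)

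For Part (II), the strategy is to show that every listed condition is equivalent to (iii), namely to ``$A$ semilocal and $J$ nilpotent,'' which is exactly condition (v) of Corollary \ref{c.fin}. The equivalence (i)$\Leftrightarrow$(iii) is Corollary \ref{c.fin} itself, which already contains the crucial symmetry left semiartinian $\Leftrightarrow$ right semiartinian $\Leftrightarrow$ $J$ nilpotent (with $C_0$ finite dimensional). For (iii)$\Rightarrow$(ii) and (iii)$\Rightarrow$(iv): a nilpotent ideal is T-nilpotent on both sides, so $A$ is semilocal with $J$ two-sidedly T-nilpotent, giving (iv); and by the perfectness criterion of the Introduction (right perfect $\Leftrightarrow$ semilocal and $J$ right T-nilpotent, together with its left analogue) $A$ is both left and right perfect, giving (ii). Conversely (ii)$\Rightarrow$(iv) is this same perfectness criterion, while (iv)$\Rightarrow$(i) uses the Introduction's equivalence ``$R$ is left semiartinian iff $R/J$ is semiartinian and $J$ is right T-nilpotent'': under semilocality $R/J$ is semisimple, hence semiartinian, so semilocal plus $J$ right T-nilpotent yields left semiartinian, and the symmetric statement yields right semiartinian from left T-nilpotence.

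The genuinely difficult content is already in place: it is Theorem \ref{t.1} (through Proposition \ref{p.2.1} and Lemma \ref{l.unbounded}), which forces a semiartinian profinite algebra to have finite coradical filtration and hence nilpotent radical, together with the left/right symmetry that Corollary \ref{c.fin} extracts from it. Granting these, the remaining work is bookkeeping, and the only point requiring care is that the various ``left or right'' hypotheses all collapse to the single two-sided condition ``$J$ nilpotent.'' I expect this collapse to be the main (mild) obstacle: it is not formal, but relies precisely on the symmetry in Corollary \ref{c.fin}, so that a one-sided semiartinian or one-sided perfect hypothesis already forces two-sided nilpotence. I would finally double check that ``semilocal'' is used consistently as ``$C_0$ finite dimensional,'' since that identification is what makes the general ring-theoretic equivalences applicable to $A=C^*$.
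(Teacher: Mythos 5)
Your proposal is correct and follows essentially the same route as the paper: Part (I) via the translation ``semilocal $\Leftrightarrow$ $C_0$ finite dimensional,'' rationality of simples over $C^*/J\cong C_0^*$, projective covers of simple rationals from \cite[Lemma 1.4]{I} (with \cite{DIT,HIT} closing the loop), and Part (II) by reducing everything to Corollary \ref{c.fin} together with the standard equivalences (left perfect $\Leftrightarrow$ semilocal and left semiartinian; left semiartinian $\Leftrightarrow$ $R/J$ semiartinian and $J$ right T-nilpotent). Your organization of Part (II) around condition (iii) rather than (i) is a cosmetic difference only, and you correctly identify that the collapse of the one-sided hypotheses rests on the symmetry in Corollary \ref{c.fin}.
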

\begin{proof}
(I) follows from the comments preceding the theorem. For (II), the equivalence (i)$\Leftrightarrow$(iii) is given by Corollary \ref{c.fin}. For (i)$\Leftrightarrow$(ii), note that a ring is left perfect if and only if it is left semiartinian and semilocal. Since a left semiartinian profinite algebra is semilocal, the equivalence follows.\\
(i) $\Leftrightarrow$ (iv) follows since a ring $R$ is left semiartinian if and only if $R/Jac(R)$ is semiartinian and $Jac(R)$ is right T-nilpotent. Hence, if $A$ is semiartinian, by (iii) $A$ is semilocal and $J=Jac(A)$ is nilpotent and then also T-nilpotent; conversely, if $A$ is semilocal then $A/J$ is semiartinian, and since $J$ is left (or right) T-nilpotent, $A$ is left (or right) semiartinian. 
\end{proof}

We end by considering a few examples. We first recall the following useful construction. Consider a quiver $Q$, which consists of a set of arrows $Q_0$ and a set of arrows $Q_1$, together with two functions $s,t:Q_1\rightarrow Q_0$, called ``source'' and ``target''. Note that the set of vertices and arrows are not necessarily finite, and loops and infinitely many arrows between two vertices are allowed. A path in $Q$ is a sequence of arrows $x_1x_2\dots x_n$ such that $t(x_{k})=s(x_{k+1})$ (we note that sometimes the ``composition'' convention is used, i.e. $t(x_{k+1})=s(x_k)$). We denote $|p|=n$ the length of $p$. The path algebra (or quiver algebra) over a field $\KK$ of this quiver is the $\KK$ vector space with a basis consisting of all paths in $Q$ and multiplication given by concatenation of paths: if $p=x_1\dots x_n$ and $q=y_1\dots y_t$ then $p*q=pq=x_1\dots x_ny_1\dots y_t$ if $t(x_n)=s(y_1)$ (i.e. $p$ ends where $q$ starts) and $p*q=pq=0$ otherwise. We denote this algebra by $\KK[Q]$. There is also a coalgebra structure on this space with a basis of paths, called the path coalgebra (or quiver coalgebra) with comultiplication and counit given by
\begin{eqnarray*}
\Delta(p) & = & \sum\limits_{p=qr} q\otimes r\\
\varepsilon(p) & = & \delta_{|p|,0}
\end{eqnarray*}
We denote this coalgebra by $\KK Q$. More generally, let $S$ be a set of paths in $Q$, which is closed under taking subpaths. Then the span $C$ of $S$ inside $\KK Q$ is easily seen to be a subcoalgebra of $\KK Q$. Such a coalgebra is called a monomial coalgebra (or path subcoalgebra). The $n$'th term $C_n$ of the coradical filtration of this coalgebra $C$ has a basis consisting of the paths in $S$ of length at most $n$. The simple left and right $C$-comodules are spanned by vertices (i.e. paths of length $0$), and the injective hulls of such a simple right comodule $\KK a$ is spanned by all paths in $S$ that start at $a$ (see \cite{simson}, and the proof of \cite[Proposition 2.5]{simson}). We can now introduce our examples.

\begin{example}
Let $Q$ the ``thick arrow quiver'', i.e. the quiver having two vertices $a,b$ and infinitely many arrows $x_n$ between $a$ and $b$:
$$\xymatrix{
a \ar@/^2ex/[rr] \ar@/^1ex/[rr]_\dots \ar@/^-1ex/[rr]_{\dots} & & b
}$$ 
Let $C$ be its path coalgebra. It has a basis consisting of $\{a,b\}\cup \{x_n|n\geq 1\}$ and comultiplication $a\rightarrow a\otimes a$, $b\rightarrow b\otimes b$, $x_n\rightarrow a\otimes x_n+x_n\otimes b$ and counit $\varepsilon(a)=\varepsilon(b)=1$, $\varepsilon(x_n)=0$. Since all paths in $Q$ have length $\leq 1$, we have $C=C_1$, so $J^2=0$. Moreover, $C$ is semilocal, since $C_0$ is spanned by $a$ and $b$. Hence, it satisfies the equivalent conditions of Corollary \ref{c.fin}, and of Theorem \ref{t.2}. The algebra $C^*$ is profinite, semilocal, semiartinian and perfect on both sides. It is not finite dimensional.\\
It a standard exercise and perhaps interesting to note that the algebra $C^*$ can be thought of as an upper triangular matrix algebra as follows:
$$C^*\cong \left(\begin{array}{cc}
	\KK & \KK^{\NN}\\
	0 & \KK
\end{array}\right)$$
(here $\KK^\NN=(\bigoplus\limits_\NN\KK)^*$).
\end{example}

\begin{example}
Let $Q$ be a quiver with finitely many vertices, and $S$ a set of paths in $Q$ which is closed under subpaths. Assume that the lengths of paths in $S$ is bounded by some number $N$. Let $C$ be the monomial coalgebra with basis $S$. Then $C=C_N$, so the coradical filtration of $C$ is finite. Moreover, $C_0$ is finite, so again $A=C^*$ is profinite, semilocal, semiartinian and perfect on both sides.
\end{example}

\begin{example}
Let $Q$ be a quiver with no oriented cycles, and finitely many vertices (note: infinitely many arrows between two vertices are allowed). Let $N=|Q_0|<\infty$. Then for any path $p$ in $Q$, we have $|p|\leq N$. Otherwise, if $|p|=N+1$ for some path then by the pigeonhole principle there is a repetition of vertices in $p$. The subpath of $p$ starting and ending at such a repeated vertex would then be an oriented cycle. Therefore, the path coalgebra $C$ of this quiver is almost finite (so $C_0$ is finite dimensional) and has finite coradical filtration. As before, this coalgebra satisfies the conditions of Corollary \ref{c.fin}, and of Theorem \ref{t.2}.\\
We note that to come up with such a quiver it is enough to start with some finite quiver with no oriented cycles, then label each arrow $\alpha:a\rightarrow b$ with a set $S_\alpha$, and construct a quiver $Q'$ obtained from the quiver $Q$ by replacing each arrow $\alpha:a\rightarrow b$ by a set of cardinality $|S_\alpha|$ (possibly infinite) of arrows $a\rightarrow b$. 
\end{example}


\begin{center}
\sc Acknowledgment
\end{center}
This work was supported in part by the strategic grant POSDRU/89/1.5/S/58852, Project ``Postdoctoral programe for training scientific researchers'' cofinanced by the European Social Fund within the Sectorial Operational Program Human Resources Development 2007-2013.\\
Tha author acknowledges the careful remarks of the referee, which greatly improved the presentation of this paper.



\bigskip\bigskip\bigskip

\vspace*{3mm} 
\begin{flushright}
\begin{minipage}{148mm}\sc\footnotesize

Miodrag Cristian Iovanov\\
University of Iowa\\
Department of Mathematics, McLean Hall \\
Iowa City, IA, USA\\
and\\
University of Bucharest, Faculty of Mathematics, Str. Academiei 14\\ 
RO-010014, Bucharest, Romania\\
{\it E--mail address}: {\tt
yovanov@gmail.com; iovanov@usc.edu}\vspace*{3mm}

\end{minipage}
\end{flushright}

\begin{thebibliography}{J\c{S}}


\bibitem[ACE]{ACE}
N. Andruskiewitsch, J. Cuadra, P. Etingof, \emph{On two finiteness conditions for Hopf algebras with nonzero integral}, preprint arXiv:1206.5934. (34 p). To appear, Ann. Sc. Norm. Super. Pisa, Cl. Sci.

\bibitem[AC]{AC}
N. Andruskiewitsch, J. Cuadra, \emph{On the structure of (co-Frobenius) Hopf algebras}, arXiv:1011.3457.  Journal of Noncommutative Geometry. Volume 7, Issue 1, pp. 83–-104 (2013).

\bibitem[AD]{AD}
N. Andruskiewitsch, S. D\u asc\u alescu, \emph{Co-Frobenius Hopf algebras and the coradical filtration}, Math. Z. 243 (2003), no. 1, 145--154.

\bibitem[C]{C} J. Cuadra, \emph{When does the rational submodule split off?}, Ann. Univ. Ferrarra -Sez. VII- Sc. Mat. Vol. LI (2005), 291--298.

\bibitem[Co]{Co}
J.H. Cozzens, \emph{Homological properties of the ring of differential polynomials}, Bull. Amer. Math. Soc. 76 (1970), 75--79.

\bibitem[D]{D}
S.E. Dickson, \emph{A torsion theory for abelian categories}, Trans. Amer. Math. Soc. 35 (1972), 317-324.

\bibitem[DIT]{DIT}
S. D\u asc\u alescu, C. N\u ast\u asescu, B. Toader, \emph{On the dimension of the space of integrals on coalgebras}, Journal of Algebra 324 (2010), 1625--1635.

\bibitem[DNR]{DNR} 
S. D\u asc\u alescu, C. N\u ast\u asescu, \c S. Raianu, \emph{Hopf Algebras: an introduction}. Vol. 235. Lecture Notes in Pure and Applied Math. Vol.235, Marcel Dekker, New York, 2001.

\bibitem[F]{F}
C. Faith, \emph{Algebra II: Ring Theory}. Vol 191, Springer-Verlag, Berlin-Heidelberg-New York, 1976.

\bibitem[G]{G}
K.R. Goodearl, \emph{Torsion and the splitting properties}, Mem. Amer. Soc. 124 (1972), Providence. 

\bibitem[Gb]{Gb}
P. Gabriel, \emph{Des categories abeliennes}, Bulletin de la S. M. F. 90 (1962), 323-448.

\bibitem[HIT]{HIT}
M. Haim, M. Iovanov, B. Torrecillas, \emph{On two conjectures of Faith}, J. Algebra 367 (2012), 166–-175.

\bibitem[I]{I}
M.C.~Iovanov, \emph{Co-Frobenius Coalgebras}, J. Algebra 303 (2006), no. 1, 146--153.

\bibitem[I1]{I1}
M.C. Iovanov, \emph{The Splitting Problem for Coalgebras: A Direct Approach}, Applied Categorical Structures 14 (2006) - Categorical Methods in Hopf Algebras - no. 5-6, 599-604.

\bibitem[IO]{IO}
M.C. Iovanov, \emph{When does the rational torsion split off for finitely generated modules}, Algebr. Represent. Theory 12 (2009), no. 2-5, 287--309.

\bibitem[INT]{INT}
M.C. Iovanov, C. N\u ast\u asescu, B. Torrecillas-Jover, \emph{The Dickson subcategory splitting conjecture for pseudocompact algebras},  J. Algebra  320  (2008),  no. 5, 2144--2155.

\bibitem[K1]{K1}
I. Kaplansky, \emph{Modules over Dedekind rings and valuation rings}, Trans. Amer. Math. Soc. 72 (1952) 327-340.

\bibitem[K2]{K2}
I. Kaplansky, \emph{A characterization of Pr$\rm\ddot{u}$fer domains}, J. Indian Math. Soc. 24 (1960) 279-281.

\bibitem[NT]{NT}
C. N\u ast\u asescu, B. Torrecillas, \emph{The splitting problem for coalgebras}, J. Algebra {\bf 281} (2004), 144-149.

\bibitem[R'74]{R} D.E. Radford, \emph{On the structure of ideals of a dual algebra of a coalgebra}, Trans. Amer. Math. Soc. 198 (1974) 123–-137.

\bibitem[R'77]{RH}
D.E. Radford, \emph{Finiteness conditions for a Hopf algebra with non-zero integrals}, J. Algebra 46 (1977), 189--195.

\bibitem[Rot]{Rot}
J. Rotman, \emph{A characterization of fields among integral domains}, An. Acad. Brasil Cienc. 32 (1960) 193--194.

\bibitem[S1]{S}
D. Simson, \emph{Coalgebras, comodules, pseudocompact algebras and tame comodule type}, Colloq. Math. 90 (2001), 101--150.

\bibitem[S2]{simson}
D.~Simson, \textit{Incidence coalgebras of intervally finite posets,
their integral quadratic forms and comodule categories}, Colloq.
Math. {\bf 115} (2009), 259-295.

\bibitem[T1]{T1}
M.L. Teply, \emph{The torsion submodule of a cyclic module splits off}, Canad. J. Math. XXIV (1972), 450-464.

\bibitem[T2]{T2}
M.L. Teply, \emph{A history of the progress on the singular splitting problem}, Universidad de Murcia, Departamento de ${\rm\acute{A}}$lgebra y Fundamentos, Murcia, 1984, 46pp.

\bibitem[T3]{T3}
M.L. Teply, \emph{Generalizations of the simple torsion class and the splitting properties}, Canad. J. Math. 27 (1975) 1056-1074.

\end{thebibliography}
\end{document}